\newtheorem{thm}{Theorem}
\newtheorem{lem}{Lemma}
\theoremstyle{definition}
\newtheorem{defn}{Definition}
\newtheorem{rem}{Remark}
\renewcommand{\Re}{\mathbb R}
\newcommand{\F}{\mathcal{F}}
\DeclareMathOperator{\conv}{conv}
\DeclareMathOperator{\perim}{perim}
\DeclareMathOperator{\area}{area}
\DeclareMathOperator{\diam}{diam}
\begin{document}

\parskip=4pt

\title[Inscribed polygons]{Extremal convex polygons inscribed in a given convex polygon}

\author[C.L. K\"odm\"on and Z. L\'angi]{Csenge Lili K\"odm\"on and Zsolt L\'angi}
\address{C.L. K\"odm\"on\\ Department of Geometry, Budapest University of Technology, Egry J\'ozsef utca 1., Budapest 1111, Hungary}
\email{kcsl@math.bme.hu}
\address{Z. L\'angi\\ MTA-BME Morphodynamics Research Group and Department of Geometry, Budapest University of Technology, Egry J\'ozsef utca 1., Budapest 1111, Hungary}
\email{zlangi@math.bme.hu}

\thanks{The second author is supported by the National Research, Development and Innovation Office, NKFI, K-119670, the J\'anos Bolyai Research Scholarship of the Hungarian Academy of Sciences, and the BME IE-VIZ TKP2020 and \'UNKP-20-5 New National Excellence Programs by the Ministry of Innovation and Technology.}

\keywords{convex polygon, perimeter, area, billiard, dual billiard}

\begin{abstract}
A convex polygon $Q$ is inscribed in a convex polygon $P$ if
every side of $P$ contains at least one vertex of $Q$. We present algorithms
for finding a minimum area and a minimum perimeter convex polygon inscribed in any given convex
$n$-gon in $O(n)$ and $O(n^3)$ time, respectively. We also investigate other variants of this problem.
\end{abstract}

\subjclass[2010]{52A38, 52B60, 68W01}

\maketitle

\section{Introduction}\label{sec:intro}

Motivated by a problem in statistics, in a recent paper \cite{ADLT19} Ausserhofer, Dann, T\'oth and the second named author examined the algorithmic aspects of finding convex polygons of maximal area, circumscribed about a given convex polygon.
The aim of our paper is to continue this investigation.

Our primary goal is to find, among the convex polygons inscribed in a given convex $n$-gon, one with 
minimum area or perimeter. For these problems we give algorithmic solutions requiring $O(n)$ and $O(n^3)$ steps, respectively.
We will see that the first problem is relatively easy to solve, but the second one is not. This problem can be regarded as a variant of the problem of finding the shortest closed billiard trajectories in a given convex polygon. We note that this problem, proposed also for convex bodies in general, is an extensively studied area of research closely related, among other things, to dynamical systems and symplectic geometries. For more information on this subject, the reader is referred to the papers \cite{ABKS16, AFOR18, Bezdek, Ghomi}, the book \cite{Tabachnikov}, or the video recording of the highly interesting talk \cite{AAspeech} of Artstein-Avidan.

Besides the algorithms, following \cite{ADLT19}, to any minimum area or perimeter convex polygon inscribed in a convex $n$-gon, we assign a sequence from $\{ U, N \}^n$ describing its combinatorial properties, and completely characterize the sequences that are assigned to some such polygon (for more details, see the first paragraphs of Subsections~\ref{subsec:area_comb} and \ref{subsec:perim_comb}).
We also collect observations about the properties of inscribed or circumscribed convex polygons minimizing or maximizing, respectively, some other geometric quantity.

Finally, we remark that the algorithmic aspects of similar problems have been studied in many papers. To give specific examples, we mention the problem of finding maximum area or perimeter convex $k$-gons in a convex $n$-gon \cite{Aggarwal} or in a point set \cite{BDDG85}, or that of finding minimum area $k$-gons in a point set under several geometric constraints \cite{EOR92}, or the problem of finding maximum area triangles enclosed, or minimum area triangles enclosing a given convex $n$-gon \cite{Chandran, KL85, RAMB86}.

We start with the main definition of our paper, which can be regarded as the `dual' of Definition 1 of \cite{ADLT19}. Here and throughout the paper by $\area(K)$ and $\perim(K)$ we denote the area and the perimeter of the convex region $K$, respectively.

\begin{defn}\label{defn:inscribed}
Let $C$ be a convex polygon. If $Q$ is a convex polygon such that every side of $C$ contains at least one vertex of $Q$, we say that $Q$ is \emph{inscribed} in $C$.
Furthermore, we set
\begin{equation}\label{eq:areadefn}
a(C) = \inf \{ \area(Q) :  Q \hbox{ is inscribed in } C \},
\end{equation}
and
\begin{equation}\label{eq:perimdefn}
p(C) = \inf \{ \perim(Q) :  Q \hbox{ is inscribed in } C \}.
\end{equation}
\end{defn}

Note that there is an inscribed polygon of arbitrarily small area in $C$ if and only if $C$ is a convex $n$-gon with $n \leq 4$. Thus,
to avoid degenerate configurations, throughout this paper $C$ always denotes a convex $n$-gon with $n \geq 5$, and vertices $p_1, p_2, \ldots, p_n$ in counterclockwise order. We extend the indices to all integers so that they are understood modulo $n$; i.e. $p_i = p_j$ if and only if $i \equiv j \mod n$.
To any convex polygon $Q$ inscribed in $C$, one can assign a cyclic sequence $s^{C}(Q) \in \{ U, N \}^n$, where the $k$th element $s^{C}_k(Q)$ of $s^{C}(Q)$ is $U$ (used) if and only if $p_k$ is a vertex of $Q$. Here, by a cyclic sequence we mean a sequence in which the indices of the elements are understood mod $n$, and for brevity, if it is clear which convex polygon $C$ denotes, we write $s(Q) = s^{C}(Q)$ and $s^{C}_k(Q)= s_k(Q)$. We call the cyclic sequence defined in this way the \emph{cyclic sequence associated to $Q$}.


The structure of the paper is as follows. In Section~\ref{sec:area} we find the minimum area polygons inscribed in $C$. In Section~\ref{sec:perim} we
consider minimum perimeter polygons inscribed in $C$. Finally, in Section~\ref{sec:others} we collect our results about circumscribed polygons which maximize some geometric quantity.
In our investigation, for any points $x,y \in \Re^2$, we denote by $xy$ the closed segment with endpoints $x,y$, and the length of $xy$ by $|xy|$. We regard points as position vectors, and thus, by $y-x$ we mean the vector pointing from $x$ to $y$.
We denote the convex hull of a set $X$ by $\conv (X)$, and for brevity, we call the relative interior points of a segment \emph{interior points}.

\section{Minimum area convex polygons inscribed in $C$}\label{sec:area}


\subsection{An algorithmic solution}\label{subsec:area_algorithm}

First, we describe the geometric background for our algorithm.

\begin{thm}\label{thm:area}
Let $Q$ be a minimum area convex polygon inscribed in $C$, with vertices $q_1,q_2,\ldots, q_k$ in counterclockwise order. Then the following holds.
\begin{itemize}
\item[(i)] $Q$ has no two consecutive vertices that are interior points of some sides of $C$.
\item[(ii)] If $q_j$ is a vertex of $Q$ contained in the interior of $p_ip_{i+1}$, then the vertices of $Q$ adjacent to $q_j$ are $p_{i-1}$ and $p_{i+2}$, and $p_{i-1}p_{i+2}$ is parallel to $p_ip_{i+1}$.
\item[(iii)] There is a minimum area convex polygon $Q_0$ inscribed in $C$, with vertices $q_1',q_2',\ldots, q_k'$ in counterclockwise order, such that
\begin{itemize}
\item $q_j$ is a vertex of $C$ if and only if $q_j=q_j'$, and
\item if $q_j$ is an interior point of $p_ip_{i+1}$, then $q_j' \in \{ p_i,p_{i+1} \}$. 
\end{itemize}
\end{itemize}
\end{thm}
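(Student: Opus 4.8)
\emph{Strategy.} The plan is to derive all three parts from one principle: if $Q$ is a minimum-area inscribed polygon, then moving one or two of its vertices inside a low-dimensional family of inscribed convex polygons cannot decrease the area, and since the area is a polynomial of low degree in the motion parameters this pins down the geometry. Two elementary tools are used throughout. The \emph{order lemma}: the vertices of $Q$ lying on $\bd C$ occur around $Q$ in the same cyclic order as around $\bd C$ (three points of a convex curve are positively oriented in the cyclic order induced by the curve, and are also positively oriented in the cyclic order induced by any convex polygon containing them). The \emph{counting bound}: a vertex of $Q$ in the relative interior of a side of $C$ meets exactly one side of $C$, and a vertex of $Q$ at a vertex of $C$ meets exactly two; hence, as $n\ge5$, no triangle can be inscribed in $C$ with two consecutive interior vertices, nor with a vertex off $\bd C$. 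As a preliminary reduction, every vertex of a minimal $Q$ lies on $\bd C$: a vertex $q$ in $\inter C$ or outside $C$ meets no side, so if $Q$ has at least four vertices one may delete $q$ (or, in the interior case, slide $q$ off the line through its two neighbours) to obtain a strictly smaller inscribed convex polygon, while the triangle case is excluded by counting.

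\emph{Part (i).} Suppose $q_j\in\inter(p_ip_{i+1})$ and $q_{j+1}\in\inter(p_\ell p_{\ell+1})$ are consecutive. By the order lemma, a side of $C$ strictly between these two would need a covering vertex of $Q$ strictly between $q_j$ and $q_{j+1}$, which is impossible, so $\ell\in\{i,i+1\}$. If $\ell=i$, the edge $q_jq_{j+1}$ lies on $p_ip_{i+1}$; deleting $q_j$ (and still leaving $q_{j+1}$ on that side) strictly decreases the area when $Q$ has at least four vertices, the triangle case being impossible by counting. If $\ell=i+1$, write $u=p_{i+1}-p_i$, $w=p_{i+2}-p_{i+1}$ and slide $q_j=p_i+su$, $q_{j+1}=p_{i+1}+tw$; expanding the shoelace formula with the fixed neighbours $q_{j-1},q_{j+2}$ as anchors gives $\area(Q)=A+Bs+Ct+\tfrac12\det(u,w)\,st$ for constants $A,B,C$, and $\det(u,w)>0$ since $p_i,p_{i+1},p_{i+2}$ turn left. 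This function is a nondegenerate saddle, so from the current value $(s_0,t_0)$, which is interior because $q_j,q_{j+1}$ are interior to their sides, one can strictly decrease it along a suitable direction; for small displacements the four strict convexity inequalities at $q_{j-1},q_j,q_{j+1},q_{j+2}$ and the inscribedness of the two moving vertices persist, yielding a smaller inscribed convex polygon — a contradiction.

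\emph{Part (ii).} Let $q_j\in\inter(p_ip_{i+1})$. By the preliminary reduction and by (i), $q_{j-1},q_{j+1}$ lie on $\bd C$ and are not interior to a side, hence are vertices of $C$, say $q_{j-1}=p_a$ and $q_{j+1}=p_b$. Since $p_a$ immediately precedes $q_j$ among the vertices of $Q$ in the cyclic order of $\bd C$, no vertex of $Q$ lies strictly between $p_a$ and $q_j$ there; if $a\le i-2$, the side $p_{i-1}p_i$ would need a covering vertex strictly between $p_a$ and $q_j$, impossible, so $a\in\{i-1,i\}$, and $a=i$ is ruled out as in the case $\ell=i$ above (the triangle sub-case again dying by counting, since then $q_{j-1}$ and $q_j$ together meet only two sides). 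Thus $q_{j-1}=p_{i-1}$, and symmetrically $q_{j+1}=p_{i+2}$. Finally, sliding $q_j$ to $q_j+tu$ keeps $Q$ inscribed and convex for small $|t|$ of either sign, so the affine function $t\mapsto\area(Q)=\mathrm{const}+\area(\triangle\, p_{i-1}\,(q_j+tu)\,p_{i+2})$ has vanishing derivative at $t=0$; this derivative is a nonzero multiple of the component of $u$ orthogonal to $p_{i+2}-p_{i-1}$, so $p_{i-1}p_{i+2}$ is parallel to $p_ip_{i+1}$.

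\emph{Part (iii).} Define $Q_0$ from $Q$ by keeping each vertex of $Q$ that is a vertex of $C$ and replacing each $q_j\in\inter(p_ip_{i+1})$ by $q_j':=p_i$. By (ii) this leaves the area unchanged: the move replaces $\triangle\, p_{i-1}q_jp_{i+2}$ by $\triangle\, p_{i-1}p_ip_{i+2}$, two triangles with common base $p_{i-1}p_{i+2}$ and apices $q_j,p_i$ on the line $p_ip_{i+1}$ parallel to that base, and distinct moves do not interfere since the neighbours $p_{i-1},p_{i+2}$ of $q_j$ are unmoved; hence $\area(Q_0)=\area(Q)$. All vertices of $Q_0$ are vertices of $C$ occurring in the cyclic order of $\bd C$, and they are pairwise distinct: distinct interior vertices of $Q$ lie on non-adjacent sides of $C$ (by (ii) and the order lemma), and $p_i$ is not already a vertex of $Q$ because its position on $\bd C$ lies strictly between the consecutive vertices $p_{i-1}=q_{j-1}$ and $q_j$ of $Q$. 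Therefore $Q_0$ is a convex polygon, it is inscribed because every side of $C$ met by a vertex of $Q$ is still met by the corresponding vertex of $Q_0$, and the bijection $q_j\leftrightarrow q_j'$ has the two stated properties by construction. I expect the main difficulty to be not the two short "analytic" computations above but the combinatorial bookkeeping around them — correctly excluding the degenerate cases $\ell=i$, $a=i$, small polygons, and a vertex of $Q$ landing on $p_i$ or $p_{i+1}$ — which requires careful joint use of the order lemma and the counting bound.
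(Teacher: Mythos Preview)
Your argument is correct. The overall architecture matches the paper's: prove (i) by perturbing the two suspect vertices, derive (ii) from (i) plus a one-vertex slide, and read off (iii). You are also more explicit than the paper about the preliminary reduction that every vertex of a minimal $Q$ lies on $\bd C$, and about the small-polygon degeneracies, both of which the paper silently assumes.

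The one genuine methodological difference is in part (i). The paper moves only one vertex at a time: from minimality it forces $p_ip_{i+1}\parallel q_{j-1}q_{j+1}$ and $p_{i+1}p_{i+2}\parallel q_jq_{j+2}$, then perturbs $q_j$ along its side (preserving area by the first parallelism) to destroy the second parallelism, after which sliding $q_{j+1}$ strictly decreases area. You instead move both $q_j$ and $q_{j+1}$ simultaneously and observe that the area is an affine-plus-bilinear function $A+Bs+Ct+\tfrac12\det(u,w)\,st$ with $\det(u,w)>0$; since a nondegenerate bilinear saddle has no interior local minimum, you get the contradiction in one stroke. Your route is shorter and avoids the intermediate parallelism conditions; the paper's route has the minor advantage that the same one-variable slide is reused verbatim to obtain the parallelism $p_{i-1}p_{i+2}\parallel p_ip_{i+1}$ in (ii), whereas you re-derive that separately. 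Either way the content is the same.
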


\begin{proof}
First, we prove (i). For contradiction, assume that $q_j$ and $q_{j+1}$ are two consecutive vertices of $Q$, and $q_j$ and $q_{j+1}$ are interior points of $p_ip_{i+1}$ and $p_{i+1}p_{i+2}$, respectively. Let $q_{j-1}$ denote the vertex of $Q$ adjacent to $q_j$ and different from $q_{j+1}$, and similarly, let $q_{j+2}$ denote the vertex of $Q$ adjacent to $q_{j+1}$ and different from $q_j$. If $p_ip_{i+1}$ is not parallel to $q_{j-1}q_{j+1}$, then one can slide $q_j$ on $p_ip_{i+1}$ in a suitable direction to decrease the area of $Q$. Thus, it follows from the minimality of the area of $Q$ that $p_ip_{i+1}$ and $q_{j-1}q_{j+1}$ are parallel. The property that $p_{i+1}p_{i+2}$ and $q_jq_{j+2}$ are parallel is obtained by a similar argument. Now, let $Q'$ be a convex polygon obtained from $Q$ by replacing $q_j$ by any point $q_j'$ of $p_ip_{i+1}$. Then, by our previous observation, $Q'$ is a minimum area convex polygon inscribed in $C$. On the other hand, $q_j'q_{j+2}$ and $p_ip_{i+1}$ are not parallel, which implies that we may slide $q_{j+1}$ on $p_ip_{i+2}$ in a suitable direction to obtain a convex polygon inscribed in $C$ with area smaller than $\area(Q)$, which contradicts our assumption. Thus, in the following we assume that Q has no two consecutive vertices that are interior points of some sides of $C$.

Now we prove (ii). Let $q_{j-1}$ and $q_{j+1}$ denote the vertices of $Q$ adjacent to $q_j$ such that $q_{j-1} \in p_{i-1}p_i$ and $q_{j+1} \in p_{i+1}p_{i+2}$.
By (i), $q_{j-1}$ is not an interior point of $p_{i-1}p_i$, and hence, we have $q_{j-1} \in \{ p_{i-1},p_i \}$. On the other hand, if $q_{j-1}=p_i$, then the convex hull $Q'$ of all vertices of $Q$ but $q_j$ is a convex polygon inscribed in $C$ with $\area(Q') < \area(Q)$. Thus, we have $q_{j-1}=p_{i-1}$. The equality $q_{j+1}=p_{i+2}$ follows by a similar argument. The fact that $p_{i-1}p_{i+2}$ is parallel to $p_ip_{i+1}$ is obtained by repeating the argument in the previous paragraph.
Finally, (iii) is a straightforward consequence of (i) and (ii).
\end{proof}

Using the idea of the proof of Theorem~\ref{thm:area}, it is easy to see that if $p_{i-1},p_i,p_{i+1}$ and $p_{i+2}$ are vertices of $P$ where $p_ip_{i+1}$ is parallel to $p_{i-1}p_{i+2}$, and there is a minimum area polygon $Q$ whose vertex set contains $p_{i-1},p_i$ and $p_{i+2}$, then replacing $p_i$ with any point of $p_ip_{i+1}$ we obtain a minimum area polygon, and the same holds after repeating such a modification arbitrarily many times. Our next remark, which can be regarded as a converse of (iii) of Theorem~\ref{thm:area} and can be proved in a similar way, states that using this procedure and starting with the minimum area polygons whose each vertex is a vertex of $P$, one can generate all minimum area polygons inscribed in $C$.

\begin{rem}\label{rem:area}
Let $Q$ be a minimum area convex polygon inscribed in $C$, with vertices $q_1, q_2, \ldots, q_k$ in counterclockwise order, such that every vertex of $Q$ is a vertex of $C$.
Let $1 \leq s_1 < s_2 < \ldots < s_m \leq k$ such that for every value of $t$, $|s_{t+1}-s_t| \geq 2$, and for any value of $t$ there is some index $i_t$ such that
$q_{s_t-1}=p_{i_t-1}$, $q_{s_t+1}= p_{i_t+2}$ and $q_{s_t} \in \{ p_{i_t},p_{i_t+1} \}$. Assume that for $t=1,2,\ldots,m$, $p_{i_t}p_{i_t+1}$ is parallel to $p_{i_t-1}p_{i_t+2}$, and let $q_{s_t}'$ be an arbitrary point $p_{i_t}p_{i_t+1}$. For any $s \notin \{ s_1, \ldots, s_m\}$, set $q_s'=q_s$. Then the convex hull $Q'$ of the points $q_1',q_2',\ldots, q_k'$ is a minimum area convex polygon inscribed in $C$.
\end{rem}

By Theorem~\ref{thm:area} and Remark~\ref{rem:area}, for any convex polygon $C$ there is a minimum area polygon $Q$ inscribed in $C$ with the additional property that every vertex of $Q$ is a vertex of $C$, and by determining all minimum area polygons with this additional property one can determine all minimum area polygons not satisfying this property. Indeed, Remark~\ref{rem:area} yields a sufficient condition for the vertices of these polygons, while it follows from Theorem~\ref{thm:area} that this condition is also necessary.
Thus, in the following we consider only convex hulls of subsets of the vertex set of $C$.

Let $Q$ be such a minimum area convex polygon. Then every side of $Q$ is either a side or a diagonal of $C$. These diagonals of $C$ must lie between vertices separated by a single vertex, as each side of $Q$ must contain at least one vertex of $C$. Let $T_i$ denote the area of the triangle with vertices $p_{i-1},p_i,p_{i+1}$. Then the problem of minimizing the area of $Q$ is equivalent to the problem of choosing some elements of the set $\{ T_1, \ldots, T_n \}$ whose sum is maximal under the restriction that no two elements with consecutive indices are chosen mod $n$. We denote this maximal value by $A$, and present an algorithm that finds the value of $A$ and a subsequence with sum equal to $A$.

First, we compute the values of all $T_i$s. Note that computing the areas of $n$ triangles, using suitable determinants, can be done in $O(n)$ steps.
Let us divide the possible subsequences $S$ into two types: If $S$ contains $T_1$ we say that it is of Type 1, and otherwise it is of Type 2.
For any $1 \leq k \leq n-1$, we denote by $A^1_k$ the maximum of the sums of the elements of subsequences of $T_1, T_2, \ldots, T_k$ containing no two elements with consecutive indices but containing $T_1$, and for any $2 \leq k \leq n$ we denote by $A^2_k$ the maximum of the sums of the elements of subsequences of $T_2, \ldots, T_k$ containing no two elements with consecutive indices. Then, clearly, $A= \max \{ A^1_{n-1}, A^2_n \}$.
We find the values $A^1_k$ and $A^2_k$ using a recursive algorithm.

Note that $A^1_1 = A^1_2 = T_1$, and $A^1_3 = T_1 + T_3$. In general, for any $4 \leq k \leq n-1$, we have $A^1_{k} = \max \{A^1_{k-1}, A^1_{k-2}+T_{k}\}$. We obtain similarly that $A^2_2=T_2$, $A^2_3 = \max \{ T_2, T_3 \}$, and $A^2_{k} =  \max \{A^2_{k-1}, A^2_{k-2}+T_{k}\}$ for all $4 \leq k \leq n$. Finally, observe that $A^1_{n-1}$, $A^2_n$, and also
\[
a(C) = \area(C) - \max \{ A^1_{n-1}, A^2_n \}
\]
can be computed in $O(n)$ steps.

\begin{rem}\label{rem:area_dectree}
Our algorithm can clearly be carried out in such a way that we keep track of all minimum area inscribed polygons whose vertices are the vertices of $C$.
These polygons are not listed by the algorithm (as their number might be even more than linear), but are represented in the form of a decision tree.
\end{rem}



\subsection{Combinatorial properties}\label{subsec:area_comb}

As in the algorithm in Subsection~\ref{subsec:area_algorithm}, in this subsection we investigate minimum area convex polygons inscribed in $C$ with the additional property that all their vertices are vertices of $C$. Our aim is to characterize the family of cyclic sequences that are associated to some such polygon $Q$ for some suitably chosen $C$, and we note that if every vertex of an inscribed polygon $Q$ is a vertex of $C$, then $s(Q)$ determines $Q$.

Clearly, no sequence $s(Q)$ contains two consecutive $N$s. Indeed, if  $s_{k}(Q)=s_{k+1}(Q)=N$, then $Q$ is disjoint from $p_kp_{k+1}$, which contradicts the condition that $Q$ is inscribed in $C$. Similarly, $s(Q)$ contains no three consecutive $U$s, since if $s_{k-1}(Q)=s_{k}(Q)=s_{k+1}(Q)=U$, then the area of $Q$ could be reduced further by not using $p_k$. Our main result in this subsection is the converse of this observation.

\begin{thm}\label{thm:area_combin}
Let $s \in \{ N, U \}^n$ with $n \geq 5$. Then the following are equivalent.
\begin{itemize}
\item[(i)] There is some convex $n$-gon $C$ with a unique minimum area convex polygon $Q$ such that $s(Q)=s$.
\item[(ii)] The cyclic sequence $s$ contains no two consecutive $N$s and no three consecutive $U$s.
\end{itemize}
\end{thm}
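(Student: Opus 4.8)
The direction (i)$\Rightarrow$(ii) is already established in the discussion preceding the theorem, so the plan is to prove (ii)$\Rightarrow$(i): given a cyclic sequence $s$ with no two consecutive $N$s and no three consecutive $U$s, I would construct a convex $n$-gon $C$ whose \emph{unique} minimum area inscribed polygon $Q$ satisfies $s(Q)=s$. By the analysis in Subsection~\ref{subsec:area_algorithm}, minimizing $\area(Q)$ over inscribed polygons with all vertices among the $p_i$ amounts to choosing an index set $I\subseteq\{1,\dots,n\}$, with no two cyclically consecutive indices, maximizing $\sum_{i\in I}T_i$, where $T_i=\area(p_{i-1}p_ip_{i+1})$; the vertices of $Q$ are then exactly the $p_k$ with $k\notin I$. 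So I must design the polygon $C$ so that: (a) the set $I_s:=\{k: s_k=N\}$ of ``skipped'' vertices is an independent set in the cycle (which is exactly the hypothesis that $s$ has no two consecutive $N$s — wait, here I must be careful: $Q$ uses $p_k$ iff $s_k=U$, so the skipped set is $\{k:s_k=N\}$, and for this to correspond to a valid inscribed polygon we need no two consecutive $N$s, \emph{and} additionally the no-three-consecutive-$U$s condition must come in as a consequence of \emph{maximality}, not feasibility); and (b) the weight vector $(T_1,\dots,T_n)$ makes $I_s$ the unique maximizer of the independent-set sum.

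The key step is choosing the $T_i$'s (equivalently, the shape of $C$) to force uniqueness. Here is the idea: assign a large weight $T_k=M$ (for a big constant $M$) to every index $k$ with $s_k=N$, and a small weight $T_k=\varepsilon_k$ (with distinct tiny positive values) to every $k$ with $s_k=U$. Since $\{k:s_k=N\}$ is independent in the cycle, it is a feasible choice, with total weight $|\{k:s_k=N\}|\cdot M$. Any competing independent set that omits some $N$-index loses at least $M$ and can gain back at most $(n)\varepsilon$, so for $M$ large it is strictly worse; hence every optimal set must contain all $N$-indices. Among independent sets containing all $N$-indices, the remaining choices are $U$-indices not adjacent to any $N$-index and mutually non-adjacent; we then need $I_s=\{k:s_k=N\}$ itself to be optimal, i.e. adding \emph{any} further $U$-index should not help — but adding a $U$-index with positive weight \emph{does} help. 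This is where the no-three-consecutive-$U$s hypothesis enters: it guarantees that every $U$-index $k$ is cyclically adjacent to some $N$-index (if $s_k=U$ and both neighbours were $U$ we'd have three consecutive $U$s, unless $n$ is very small — the $n\ge 5$ hypothesis handles edge cases), so no $U$-index can be added to $I_s$ while staying independent. Thus $I_s$ is forced to be \emph{exactly} the optimal set, and distinctness of the $\varepsilon_k$ (or simply the fact that there is no slack at all) gives uniqueness.

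What remains is the geometric realizability: I must exhibit an actual convex $n$-gon $C=p_1\cdots p_n$ realizing (approximately) this weight pattern, i.e. with $T_k$ large exactly when $s_k=N$ and small otherwise, while $C$ stays convex and nondegenerate. A clean way: start from a regular $n$-gon and perturb — push each vertex $p_k$ with $s_k=N$ slightly outward along its angle bisector so that the triangle $p_{k-1}p_kp_{k+1}$ becomes ``tall'' (large area), and leave the $U$-vertices essentially on a common near-flat arc so their cap triangles have tiny area. One must check convexity is preserved (small perturbations of a strictly convex polygon remain strictly convex) and that the resulting $T_k$'s have the required relative sizes; since the condition ``$M$ sufficiently larger than $n\varepsilon$'' is the only quantitative requirement, a sufficiently pronounced perturbation works. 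The main obstacle, I expect, is precisely this last realizability step done carefully: ensuring simultaneously that (1) $C$ is convex, (2) the large/small dichotomy among the $T_k$ is strict enough for the combinatorial argument, and (3) there are genuinely no ties among optimal index sets — the latter may need the $\varepsilon_k$ chosen generically (e.g. rationally independent) so that no two different admissible subsets of $U$-indices have equal weight, though in fact the argument above shows the optimal set is forced to be $I_s$ with no freedom, so ties are not an issue and only strictness of $M\gg \sum\varepsilon_k$ matters. I would also double-check the small cases $n=5,6$ by hand, since there the ``every $U$ touches an $N$'' claim and independence constraints are tightest.
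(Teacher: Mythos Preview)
Your combinatorial reduction is correct and takes a genuinely different route from the paper. You recast the problem as: design the triangle areas $T_i$ so that $I_s=\{k:s_k=N\}$ is the unique maximum-weight independent set on the $n$-cycle. The key observation --- that ``no three consecutive $U$s'' forces every $U$-index to be adjacent to some $N$-index, hence $I_s$ is the \emph{only} independent set containing all of $I_s$ --- is exactly right, and with $T_k\approx M$ at $N$-indices and $T_k\approx\varepsilon$ at $U$-indices, uniqueness of the maximizer follows cleanly. The paper instead builds $C$ directly: it places the $N$-vertices at the corners of a regular $k$-gon $Q_0$ (where $k$ is the number of $N$s) and inserts the $U$-vertices as tiny bumps near the midpoints of the sides of $Q_0$; it then argues that any inscribed polygon must contain the midpoint polygon $G$, so that using any $N$-vertex $q_i$ forces containment of $\conv(G\cup\{q_i\})$ and hence strictly larger area than the candidate $Q$. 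Your approach isolates the combinatorics; the paper's buys a concrete $C$ without a separate realizability step.

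That realizability step is the genuine gap in your write-up. The perturbation you sketch --- push $N$-vertices outward from a regular $n$-gon, flatten the $U$-vertices --- is not obviously correct: moving $p_k$ changes three consecutive $T_i$'s, not one; convexity bounds how far you can push; and for a $U$-vertex with one $N$-neighbor and one $U$-neighbor the effect on its $T_k$ is not uniform in $n$, so getting all $U$-triangles simultaneously small while all $N$-triangles stay uniformly large needs a real argument. (In fact the paper's construction solves your realizability problem for free: with the $N$-vertices at the corners of a regular $k$-gon and each $U$-vertex within $\varepsilon$ of the corresponding side, every $U$-triangle has area $O(\varepsilon)$ while every $N$-triangle is bounded below independently of $\varepsilon$.) There is one smaller gap: you argue only about inscribed polygons whose vertices are vertices of $C$, but the theorem asks for uniqueness over \emph{all} inscribed polygons. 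You should invoke Theorem~\ref{thm:area}(ii)--(iii): a minimum with an interior vertex would, by the parallelism condition there and by sliding that vertex to either endpoint, produce two distinct vertex-subset minima, contradicting the uniqueness you established.
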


\begin{proof}
We only need to prove that (ii) implies (i). Let $k$ denote the number of $N$s in $s$, and observe that since $n \geq 5$, (ii) implies that $k \geq 2$.

First, we present a construction for the case $k \geq 3$.
Let $Q_0$ be  a regular $k$-gon, let the vertices of $Q_0$ be $q_1, q_2, \ldots, q_k$ in counterclockwise order, and let $g_i$ be the midpoint of $q_iq_{i+1}$ for $i=1,2,\ldots,k$. Set $G = \conv \{ g_1, g_2, \ldots, g_k \}$. Choose some arbitrary small value $\varepsilon > 0$.

By the conditions in (ii), for any $1 \leq i \leq k$, there is either one or two $U$s between the two $N$s in $s$ corresponding to $q_i$ and $q_{i+1}$. If there is one $U$ between them, we glue an isosceles triangle to $Q_0$ with $q_iq_{i+1}$ as its base such that the new vertex $q_i'$ is closer to $g_i$ than $\varepsilon$. Similarly, if there are two $U$s between the two $N$s corresponding to $q_i$ and $q_{i+1}$, we glue a symmetric trapezoid to $Q_0$, with $q_iq_{i+1}$ as its base such that the two new vertices $q_i',q_i''$ are closer to $q_iq_{i+1}$ than $\varepsilon$, and $q_i'q_i''$ is parallel to $q_iq_{i+1}$ and its length is less than $\frac{|q_i'g_i|}{2}= \frac{|q_i''g_i|}{2}$. We carry out this operation for all values of $i$, and obtain an $n$-gon, which we denote by $C$ (cf. Figure~\ref{fig:area_const}). Similarly, we denote the convex hull of the points $q_i'$ and $q_i''$ by $Q$. Note that if $\varepsilon$ is sufficiently small, removing one point from each pair $\{ q_i', q_i'' \}$, the convex hull $Q^*$ of the remaining vertices of $Q$ contains $G$, and the same statement holds if we replace 
some of the vertices of $Q^*$ with the corresponding midpoints $g_i$.

\begin{figure}[ht]
\begin{center}
\includegraphics[width=0.4\textwidth]{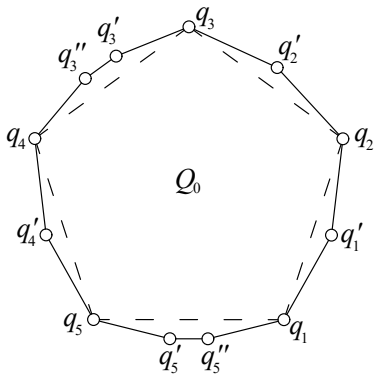}
\caption{The polygon $C$ constructed in the proof of Theorem~\ref{thm:area_combin} for $k=5$ and $s=NUNUNUUNUNUU$.}
\label{fig:area_const}
\end{center}
\end{figure}

We show that $Q$ is the unique minimum area polygon inscribed in $C$ if $\varepsilon$ is sufficiently small. First, observe that in this case $Q$ is convex and its area is close to $\area(G)$. In particular, for sufficiently small values of $\varepsilon$ the inequality $\area(Q) < \area (\conv (G \cup \{ q_i\} ))$ is satisfied for all values of $i$. On the other hand, if $Q'$ is any convex polygon inscribed in $C$ with vertices chosen from the vertices of $C$, then $q_i', q_i''$ or $g_i$ belongs to $Q'$ for all values of $i$. Indeed, if there is one $U$ between the two $N$s corresponding to $q_i$ and $q_{i+1}$, then $q_i'$ is a vertex of $Q'$, or both $q_i$ and $q_{i+1}$ are vertices of $Q'$, implying that $g_i \in Q'$. In the opposite case the fact that $Q'$ is inscribed in $C$ yields that it contains a point of the segment $q_i'q_i''$. Since every vertex of $Q'$ is a vertex of $C$, from this $q_i' \in Q'$ or $q_i'' \in Q'$ follows. But by our previous observation this implies that $G \subseteq Q'$. Thus, if $q_i$ is a vertex of $Q'$ for some value of $i$, then $\area (Q) < \area (\conv (G \cup \{ q_i\} )) \leq \area (Q')$. On the other hand, if no $q_i$ is a vertex of $Q'$, then the facts that every vertex of $Q'$ is a vertex of $C$ and $Q'$ is inscribed in $C$ implies that $Q'=Q$. By Theorem~\ref{thm:area} and Remark~\ref{rem:area}, since $Q$ is a unique minimum area polygon inscribed in $C$ with the additional assumption that every vertex of $Q$ is a vertex of $C$, it follows that there is no minimum area polygon inscribed in $C$ having a vertex in the interior of a side of $C$. Finally, we clearly have $s(Q)=s$, which yields the assertion for $k \geq 3$.
If $k=2$, a similar construction proves the statement, where the regular polygon $Q_0$ is replaced by a straight line segment.
\end{proof}

\section{Minimum perimeter convex polygons inscribed in $C$}\label{sec:perim}

\subsection{An algorithmic solution}\label{subsec:perim_algorithm}

Let $\mathcal{F}(C)$ denote the family of minimum perimeter convex polygons inscribed in $C$.
We start with the description of some properties of the elements of $\mathcal{F}(C)$.
We first prove Lemma~\ref{lem:reflection}, and note that the property described in it is well known in the theory of billiards, and it can be proved also via a simple differential geometric argument.

\begin{lem}\label{lem:reflection}
Let $Q$ be a convex polygon with minimum perimeter inscribed in $C$. Let $q_{j-1},q_j,q_{j+1}$ be three consecutive vertices of $Q$. If $q_j$ is an interior point of a side $p_ip_{i+1}$ of $C$, then $Q$ satisfies the \emph{optic reflection law} at $q_j$, i.e. the angles $\angle p_iq_jq_{j-1}$ and $\angle q_{j+1}q_jp_{i+1}$ are equal.
\end{lem}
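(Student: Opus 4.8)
The plan is to prove Lemma~\ref{lem:reflection} by the classical reflection/variational argument from the theory of billiards, adapted to the polygonal setting. Since $q_j$ is an interior point of the side $p_ip_{i+1}$, for all sufficiently small perturbations we may move $q_j$ along the line containing $p_ip_{i+1}$ and the resulting polygon will still be inscribed in $C$ (each side of $C$ other than $p_ip_{i+1}$ is still met by the unchanged vertices of $Q$, and $p_ip_{i+1}$ is still met by the perturbed $q_j$), and for small enough displacements it will still be convex. Hence $q_j$ is a local minimum, among points of the line $\ell$ through $p_i$ and $p_{i+1}$, of the function $f(x) = |q_{j-1}x| + |xq_{j+1}|$, because $\perim(Q)$ differs from $|q_{j-1}q_j| + |q_jq_{j+1}|$ only by a constant independent of the position of $q_j$.

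Next I would carry out the standard first-order computation. Parametrising $\ell$ by arclength and writing $u$ for a unit vector along $\ell$, the derivative of $f$ at $q_j$ is
\[
\left\langle \frac{q_j - q_{j-1}}{|q_j - q_{j-1}|}, u \right\rangle + \left\langle \frac{q_j - q_{j+1}}{|q_j - q_{j+1}|}, u \right\rangle,
\]
which must vanish at a local minimum. Geometrically this says that the unit vectors from $q_j$ toward $q_{j-1}$ and from $q_j$ toward $q_{j+1}$ have opposite projections onto $\ell$; equivalently, the segments $q_jq_{j-1}$ and $q_jq_{j+1}$ make equal angles with $\ell$ but on opposite sides of the normal direction. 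Since $q_{j-1}$ and $q_{j+1}$ both lie on the same side of $\ell$ as the interior of $C$ (as $Q \subseteq C$), this is exactly the assertion that $\angle p_iq_jq_{j-1} = \angle q_{j+1}q_jp_{i+1}$, i.e. the optic reflection law holds at $q_j$. Alternatively, and perhaps more in keeping with the elementary tone of the paper, one can give the reflection-trick proof: reflect $q_{j+1}$ across $\ell$ to get $q_{j+1}^\ast$; then $|q_jq_{j+1}| = |q_jq_{j+1}^\ast|$, so $f(x) = |q_{j-1}x| + |xq_{j+1}^\ast|$ for $x \in \ell$, and this is minimised precisely when $x$ lies on the segment $q_{j-1}q_{j+1}^\ast$, which forces the equal-angle condition by vertical angles.

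There is no serious obstacle here; the only point requiring a little care is the justification that the perturbed polygon remains convex and inscribed, so that $q_j$ genuinely realises an unconstrained (within $\ell$) local minimum of $f$ rather than a constrained one — but this is immediate from $q_j$ being an \emph{interior} point of $p_ip_{i+1}$ and the openness of the convexity and inscribedness conditions under small moves of a single vertex along a line. One should also note the degenerate possibility that $q_{j-1}$, $q_j$, $q_{j+1}$ are collinear; in that case both angles in question are equal (each being the angle between $\ell$ and that common line), so the statement holds trivially, and otherwise the minimiser on $\ell$ is unique and the above argument applies verbatim.
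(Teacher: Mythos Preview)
Your proof is correct and follows essentially the same approach as the paper: both reduce to the observation that $q_j$ must minimise $|q_{j-1}x|+|xq_{j+1}|$ over the line through $p_ip_{i+1}$, and then derive the equal-angle condition from this. The only cosmetic difference is in the last step --- the paper invokes the tangent-line property of an ellipse with foci $q_{j-1},q_{j+1}$, whereas you use the derivative computation (or, equivalently, the reflection trick), and you are somewhat more explicit about why the perturbed polygon remains inscribed and convex.
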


\begin{proof}
The locus of the points in the plane with the property that the sum of their distances from $q_{j-1}$ and $q_j$ is a given constant is an ellipse with $q_{j-1}$ and $q_{j+1}$ as its foci. Let $E$ be the ellipse with $q_{j-1}$ and $q_{j+1}$ as its foci, and containing $q_j$ on its boundary. Since $q_j$ minimizes the sum of the distances from $q_{j-1}$ and $q_{j+1}$ among the points of the line $L$ through $p_ip_{i+1}$, $L$ is tangent to $E$ at $q_j$. Thus, the equality $\angle p_iq_jq_{j-1} =\angle q_{j+1}q_jp_{i+1}$ follows from the property of ellipses that the tangent line $L$ at $q_j$ bisects the exterior angles of the triangle $\conv \{q_{j-1},q_j,q_{j+1} \}$ at $q_j$ \cite{Besant}.
\end{proof}

Clearly, by Lemma~\ref{lem:reflection}, every vertex of a minimum perimeter inscribed polygon is either a vertex of $C$ or satisfies the reflection law.
To investigate these polygons, we recall from Section~\ref{sec:intro} the notion of a cyclic sequence $s(Q)$ associated to an inscribed polygon $Q$: the $k$th element $s_k(Q)$ of $s(Q)$ is $U$ if and only if $p_k$ is a vertex of $Q$.

Note that if $s(Q)$ contains exactly $k$ $U$s, then $Q$ has exactly $(n-k)$ vertices. It is also worth noting that convex polygons inscribed in $C$ whose every vertex satisfies the optic reflection law are called \emph{Fagnano orbits}, and that a necessary condition for the existence of a Fagnano orbit in an even-sided polygon can be found in \cite{DR09} as Lemma 2.

In the theory of billiards it is well known that any Fagnano orbit in a convex $n$-gon $C$ with $n$ even can be modified in a natural way to construct infinitely many Fagnano orbits; such a configuration is shown in Figure~\ref{fig:Fagnano}. It is easy to see that all these orbits correspond to minimum perimeter polygons inscribed in $C$. On the other hand, no such construction is known for $n$-gons with $n$ odd, and it is an open problem to characterize the convex $n$-gons with $n$ odd in which Fagnano orbits exist \cite{DR09}. Our next theorem answers a related problem.

\begin{thm}\label{thm:perim_struct}
Let $s \in \{ U , N \}^n$. Then the following holds.
\begin{itemize}
\item[(i)] If $n$ is odd or $s \neq \overbrace{NN \ldots N}^{n}$, then there is at most one minimum perimeter polygon $Q \in \F(C)$ with $s(Q) = s$.
\item[(ii)] If $n$ is even and $s=\overbrace{NN \ldots N}^{n}$, then either there is no $Q \in \F(C)$ with $s(Q) = s$, or there are infinitely many. In the latter case, if $Q_1, Q_2 \in \F(C)$ satisfy $s(Q_1)=s(Q_2)=s$, then all corresponding pairs of sides of $Q_1$ and $Q_2$ are parallel (cf. Figure~\ref{fig:Fagnano}).
\end{itemize}
\end{thm}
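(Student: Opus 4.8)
The plan is to treat the two parts of Theorem~\ref{thm:perim_struct} separately, relying on the convexity of the perimeter functional along deformations that keep vertices on fixed lines.

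\smallskip

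\emph{Part (i): uniqueness when $n$ is odd or $s\neq NN\ldots N$.}
Suppose $Q_1,Q_2\in\F(C)$ with $s(Q_1)=s(Q_2)=s$. Because every vertex of a minimum perimeter inscribed polygon is either a vertex of $C$ or an interior point of a side satisfying the optic reflection law (Lemma~\ref{lem:reflection}), the combinatorial type $s$ fixes, for each vertex of $Q_1$ and $Q_2$, either the exact point (when it is a vertex of $C$) or the line on which it must lie (when it is in the interior of some side $p_ip_{i+1}$). I would parametrize the line segment of polygons $Q_t=(1-t)Q_1+tQ_2$ obtained by moving each vertex linearly along its side; for $t\in[0,1]$ each $Q_t$ is an inscribed polygon with the same associated sequence, and its perimeter is a convex function of $t$ (each edge length is a convex function of the two endpoint parameters). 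Since $Q_1$ and $Q_2$ both minimize, $\perim(Q_t)$ is constant on $[0,1]$, forcing each edge length to be affine in $t$; for a segment whose endpoints move along fixed (non-parallel) lines this can only happen if the edge direction stays constant, i.e.\ the whole deformation is a translation of some edges. The key point then is that a nontrivial such deformation forces a certain alternating/parity condition: translating one edge forces its neighbours to rotate unless they are also translated, and propagating this around the cycle either returns a contradiction (when $n$ is odd) or shows that every vertex lies in the interior of a side, i.e.\ $s=NN\ldots N$ and $n$ is even. This is exactly the classical rigidity argument for Fagnano orbits, and it should be spelled out by chasing the constraint "edge $q_{j-1}q_j$ has fixed direction'' around the polygon and using that at a vertex of $C$ the two incident edges cannot both merely translate.

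\smallskip

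\emph{Part (ii): the all-$N$ case with $n$ even.}
Here $Q$ is a Fagnano orbit: every vertex $q_j$ lies in the relative interior of a side $p_ip_{i+1}$ and satisfies the reflection law. Assume $Q_1,Q_2\in\F(C)$ both have $s=NN\ldots N$; run the same convexity argument as in Part~(i). Now there is no vertex of $C$ to obstruct the propagation, so constancy of $\perim(Q_t)$ forces every edge of $Q_t$ to keep a fixed direction throughout the deformation; hence the corresponding edges of $Q_1$ and $Q_2$ are parallel, which is the stated conclusion. To see that there are then infinitely many such orbits, I would observe that the segment $\{Q_t:t\in[0,1]\}$ already furnishes a one-parameter family, and moreover that once one Fagnano orbit of this combinatorial type exists one can push it slightly (translate the edges) and stay inside $C$ — the standard picture in Figure~\ref{fig:Fagnano}. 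Conversely, if no orbit of this type exists there is simply nothing in $\F(C)$ with this sequence (this is the "either/or'' in the statement and needs no proof beyond the definition).

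\smallskip

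\emph{Main obstacle.}
The routine part is the convexity of perimeter along the linear deformation; the delicate part is the combinatorial propagation in Part~(i): showing that "all incident edges merely translate'' cannot happen around an odd cycle, and that around an even cycle it forces \emph{every} vertex to be a reflection vertex. I would handle this by encoding each edge direction as a unit vector and writing the closing condition $\sum_j (q_{j+1}-q_j)=0$; a translation-only deformation fixes all these directions, and at each vertex of $C$ the angle condition (the two edges meeting at a genuine vertex of $C$ subtend the fixed exterior angle at $p_k$) pins down both incident vertices, so the deformation is trivial there. Propagating, a nontrivial deformation exists only if there are no vertices of $C$ among the $q_j$, i.e.\ $s=NN\ldots N$; and the parity obstruction for odd $n$ comes from the orientation-reversing nature of each reflection, exactly as in the classical nonexistence of Fagnano-type continuous families for odd polygons. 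I expect this parity/rigidity step to be where the real work lies, and I would present it carefully rather than by analogy.
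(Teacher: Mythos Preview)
Your approach is correct but genuinely different from the paper's. The paper proves (i) by the classical \emph{unfolding} technique: between two $U$-vertices $p_i,p_j$ it reflects $p_i$ successively across the intervening sidelines to obtain a point $p_i^*$, so that any admissible billiard arc from $p_i$ to $p_j$ unfolds to the straight segment $p_i^*p_j$; this pins down both the length and the intermediate vertices at once, and for the all-$N$ case it reduces the perimeter to $|q(\tau)q^*(\tau)|$, a quantity whose square is quadratic in the single parameter $\tau$. Your route instead exploits the convexity of $\perim$ along the vertexwise interpolation $Q_t$, deduces that each edge length is affine, hence each edge keeps a fixed direction, and then propagates rigidity from any $U$-vertex. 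Both arguments ultimately meet at the same parity computation (reflections are orientation-reversing, so signed edge-offsets alternate, and closing up after $n$ reflections forces $n$ even or the deformation to be trivial). The paper's method is constructive and feeds directly into the $O(n^3)$ algorithm later in the section; your method is cleaner conceptually and avoids setting up the unfolding, but yields no explicit construction of the unique $Q$.

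One point to tighten in (ii): your interpolation shows ``two $\Rightarrow$ infinitely many'', not ``one $\Rightarrow$ infinitely many''. The fix is already implicit in your framework: starting from a single Fagnano orbit $Q_1\in\F(C)$ with $n$ even, build the one-parameter family of parallel-edge translates (this is where the alternating-offset computation is needed, to see that the family closes up); on this family each edge length is affine in the parameter, so $\perim$ is affine, and since $Q_1$ is an interior minimizer (all its vertices lie in open sides) the restriction is constant. Hence every nearby translate also lies in $\F(C)$. Spelling this out would replace the appeal to ``the standard picture'' and complete the dichotomy.
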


\begin{figure}[ht]
\begin{center}
\includegraphics[width=0.35\textwidth]{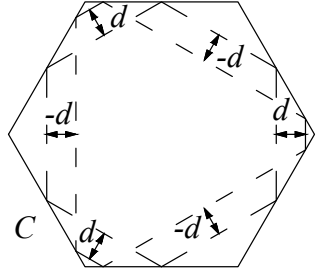}
\caption{Minimum perimeter polygons inscribed in a regular hexagon, indicated with dashed lines. Note that the signed distances of the corresponding sides of these polygons form an alternating sequence.}
\label{fig:Fagnano}
\end{center}
\end{figure}

\begin{proof}
First, we prove (i).

Consider the case that $s$ contains at least two $U$s, and let $p_i$ and $p_j$ be two vertices of $Q$ corresponding to $U$s in $s$ such that $p_{i+1}, p_{i+2},\ldots, p_{j-1}$ correspond to $N$s.
We label the vertices of $Q$ in such a way that $q_i=p_i, q_{i+1} \in p_{i+1}p_{i+2}, \ldots, q_{j-2} \in p_{j-2}p_{j-1}$ and $q_{j-1}=p_j$.
Let $q_i'$ denote the reflected copy of $q_i$ to the line through $p_{i+1}p_{i+2}$. By the optic reflection law, $q_i'$, $q_{i+1}$ and $q_{i+2}$ are collinear, and $|q_i'q_{i+2}| = | q_iq_{i+1}| + |q_{i+1}q_{i+2}|$. Now, if $q_i''$ denotes the reflected copy of $q_i'$ to the line through $p_{i+2}p_{i+3}$, then by the optic reflection law, $q_i''$, $q_{i+2}$ and $q_{i+3}$ are collinear, and $|q_i''q_{i+3}| = |q_iq_{i+1}|+|q_{i+1}q_{i+2}| + |q_{i+2}q_{i+3}|$.
Continuing this process, if $q_i^*$ denotes the point obtained by subsequently reflecting $q_i$ to the lines through $p_{i+1}p_{i+2}, p_{i+2}p_{i+3}, \ldots, p_{j-2}p_{j-1}$, respectively, then $|q_i^*q_{j-1}| = \sum_{t=i}^{j-2} |q_tq_{t+1}|$. Note that $q_i = p_i$ and $q_{j-1}=p_j$, and hence, $q_i^*q_{j-1}$ is depends only on $s(Q)$ and does not depend on $Q$. This yields, in particular, that the length of the boundary of $Q$ from $p_i$ to $p_j$ is independent of $Q$.
On the other hand, for any $i \leq t \leq j-3$, if $q_t^*$ denotes the point obtained by subsequently reflecting $q_t$ to the lines through $p_{t+1}p_{t+2}, \ldots, p_{j-2}p_{j-1}$, then all $q_t^*$s lie on $q_i^*q_{j-1}$. Thus, $q_{j-2}$ is the intersection point of $p_{j-2}p_{j-1}$ and $q_i^*q_{j-1}$, $q_{j-3}$ is the intersection point of $p_{j-3}p_{j-2}$ and the reflected copy of $q_i^*q_{j-2}$ to the line through $p_{j-2}p_{j-1}$, and the remaining $q_t$s can be obtained in a similar way. We note that this process can be carried out in $O(j-i)$ steps, including the check whether the obtained points $q_t$ indeed lie in the interiors of the corresponding sides of $C$.
If $s$ contains exactly one $U$, the same argument can be applied in which the points $p_i$ and $p_j$ coincide, and the part of the boundary of $Q$ between $p_i$ and $p_j$ is equal to the whole boundary of $Q$.

Consider the case that $s =NN \ldots N$, and let $q_0$ be the vertex of $Q$ on $p_np_1$.
For $l \in \{ n, 1 \}$, let $p_l^*$ denote the point obtained by reflecting $p_l$ subsequently about the lines through $p_1p_2, p_2p_3, \ldots, p_{n-1}p_n$, respectively. Let $q(\tau) = \tau p_n + (1-\tau) p_1$, and $q^*(\tau) = \tau p^*_n + (1-\tau) p^*_1$ for any $\tau \in (0,1)$.
By the argument in the previous paragraph, for any value of $\tau$ there is exactly one closed polygonal curve starting at $q(\tau)$, having subsequent vertices $q_1(\tau), \ldots, q_{n-1}(\tau)$ on the lines through $p_1p_2, \ldots, p_{n-1}p_n$, respectively, and returning to $q(\tau)$ such that each vertex $q_i(\tau)$ satisfies the optic reflection law for all $1 \leq i \leq n-1$. This curve, which we denote by $\Gamma(\tau)$, can be obtained by taking the straight line segment $q(\tau)q^*(\tau)$, applying reflections and taking intersections. Thus, the length of the curve $\Gamma(\tau)$ is equal to $|q(\tau)q^*(\tau)|$.

A necessary condition for some $\Gamma(\tau_0)$ to be the boundary of an element of $\F(C)$, which we denote by $Q(\tau_0)$, is that
\begin{itemize}
\item[(a)] all its vertices are contained in the interiors of the sides of $C$;
\item[(b)] it has minimal length among all curves $\Gamma(\tau)$, $\tau \in (0,1)$, satisfying (a).
\end{itemize}
Let $I \subseteq (0,1)$ denote the values of $\tau$ such that $\Gamma(\tau)$ satisfies (a). We need to find the minimum of the length of $\Gamma(\tau)$ on $I$.
Since for all values of $\tau$, the length of $\Gamma(\tau)$ is $|q(\tau)q^*(\tau)|$, it is sufficient to do it for $|q(\tau)q^*(\tau)|$. Note that $|q(\tau)q^*(\tau)|^2 = a \tau^2 + b\tau + c$ for some $a \geq 0, b,c, \in \Re$. Furthermore, as $I$ is clearly open in $[0,1]$, any minimum of $|q(\tau)q^*(\tau)|^2$ on $I$ is a local minimum of $|q(\tau)q^*(\tau)|^2$ on $(0,1)$. Thus, we have that either there is at most one $\Gamma(\tau)$ satisfying (a) and (b), or $|q(\tau)q^*(\tau)|$ is independent of $\tau$. Assume the latter. Then an elementary consideration shows that the vector $q^*(\tau)-q(\tau)$ is independent of $\tau$, which yields that the corresponding pairs of sides of $Q(\tau_1)$ and $Q(\tau_2)$ are parallel for all $\tau_1, \tau_2 \in I$, where $Q(\tau)$ denotes the convex polygon bounded by the closed polygonal curve $\Gamma(\tau)$ for all $\tau \in I$.

Let $Q(\tau_0) \in \F(C)$ for some $\tau_0 \in I$. To complete the proof we show that there is some neighborhood $W$ of $\tau_0$ in $(0,1)$ such that if $n$ is even then $Q(\tau) \in \F(C)$ for any $\tau \in W$, and if $n$ is odd, then $Q(\tau) \notin \F(C)$ for any $\tau \in W \setminus \{ \tau_0 \}$.
Consider some $\tau \in (0,1)$ sufficiently close to $\tau_0$. Then, as $I$ is open, we have  $\tau \in I$. Imagine a billiard ball at $q(\tau)$ and push it parallel to $q(\tau_0) q_1(\tau_0)$. The ball bounces back from the side $p_2p_3$ at $q_1(\tau)$ by the optic reflection law, and runs parallel to $q_1(\tau_0)q_2(\tau_0)$. Here an elementary computation shows that, also by the optic reflection law, the signed distance of the parallel segments $q(\tau)q_1(\tau)$ and $q(\tau_0)q_1(\tau_0)$ is the opposite of the signed distance between $q_1(\tau)q_2(\tau)$ and $q_1(\tau_0)q_2(\tau_0)$ (cf. Figure~\ref{fig:Fagnano} for an illustration of this phenomenon with a regular hexagon as $C$). Repeating this consideration for all sides of $Q(\tau_1)$, we obtain that the billiard trajectory ends at $q(\tau)$ if and only if the ball bounces back an odd number of times; that is, if $n$ is even.
\end{proof}

\begin{rem}\label{rem:evenn}
The proof of (ii) of Theorem~\ref{thm:perim_struct} yields a little more: if $n$ is even and there are infinitely many polygons $Q \in \F(C)$ with $s(Q) = NN \ldots N$, then these polygons are of the form $Q(\tau)$ for some subinterval $I' \subseteq (0,1)$. Indeed, the property follows from the observation that for any edge $p_ip_{i+1}$ the values of $\tau$ such that after reflections the corresponding point lies on $p_ip_{i+1}$ is an interval, and the intersection of intervals is an interval.
\end{rem}

\begin{rem}\label{rem:alln}
Assume that there is some $Q \in \F(C)$ with $s(Q) = NN \ldots N$. Then the boundary of $Q$ coincides with some $\Gamma(\tau_0)$ satisfying the properties in (a) and (b).
In the other direction, if some $\Gamma(\tau_0)$ satisfies the properties in (a) and (b), then, applying the reflection argument as in the second part of the proof of Theorem~\ref{thm:perim_struct} and using the elementary fact that the length of any polygonal path connecting two points is at least as large as the distance between the points, it follows that $\Gamma(\tau_0)$ is the boundary of some $Q \in \F(C)$ with $s(Q) = NN \ldots N$. In particular, this implies that all elements $Q \in \F(C)$ with $s(Q) = NN \ldots N$, and also their perimeter, can be found in $O(n)$ steps. Indeed, carrying out the reflections to $p_np_1$, we can compute the function $\tau \mapsto |q(\tau)q^*(\tau)|$ and determine its unique local minimum in $O(n)$ steps.
\end{rem}

\begin{rem}\label{rem:regular}
Let $C$ be a regular $n$-gon, and let $Q$ be the convex hull of the midpoints of the edges of $C$. From Theorem~\ref{thm:perim_struct} and Remark~\ref{rem:alln} it follows that $Q$ is a minimum area convex polygon inscribed in $C$. Indeed, with the notation of the proof of Theorem~\ref{thm:perim_struct} $Q$ coincides with the boundary of $\Gamma ( 1/2 )$. On the other hand, by the symmetry of $C$, we have that $\Gamma(\tau)$ and $\Gamma(1-\tau)$ are congruent, implying that the function $\tau \mapsto |q(\tau)q^*(\tau)| = |q^*(\tau)-q(\tau)|$ is symmetric to $1/2$. This yields that either $q^*(\tau)-q(\tau)$ is independent of $\tau$, or it moves on a line perpendicular to $q^*(1/2)-q(1/2)$. In both cases, we obtain that the length of $\Gamma(1/2)$ is the minimum of $|q(\tau)q^*(\tau)|$ for all $\tau \in \Re$. By Remark~\ref{rem:alln}, this implies that $Q \in \F(C)$.
\end{rem}

In the remaining part of Subsection~\ref{subsec:perim_algorithm}, we present an algorithm to find an element of $\F(C)$ and its perimeter.
Our algorithm is based on the one in \cite{ADLT19}, with the necessary modifications.

For any $i,j$ with $i< j \leq i+n$, let $\Gamma_{ij}$ be a shortest polygonal curve $\bigcup_{t=1}^{m-1} q_tq_{t+1}$ such that
\begin{itemize}
\item[(i)] $q_1=p_i$, $q_m=p_j$, and all vertices of $\Gamma_{ij}$ are boundary points of $C$,
\item[(ii)] all sides $p_ip_{i+1}, p_{i+1}p_{i+2}, \ldots, p_{j-1}p_j$ contain at least one vertex of $\Gamma_{ij}$, and
\item[(iii)] the points $q_1,q_2,\ldots, q_m$ are in this councerclockwise order in the boundary of $C$.
\end{itemize}
Let $\Pi_{ij}$ denote the length of $\Gamma_{ij}$. Clearly, if $Q \in \F(C)$, and $s(Q) \neq NN \ldots N$, then $\perim(Q) = \Pi_{i,i+n}$ for some value of $i$. We present a recursive algorithm which computes $\Pi_{ij}$ for all $i < j \leq i+n$.

First, clearly, we have $Q_{ij}=p_ip_{i+1}$ for $j=i+1$, and for $j=i+2$, $Q_{ij} = p_ip_{i+2}$. This implies that $\Pi_{i,i+1}= |p_ip_{i+1}|$ and  $\Pi_{i,i+2} = |p_ip_{i+2}|$ for all values of $i$.
Consider some $2 < k \leq n$, and assume  that we have computed all values $\Pi_{st}$ with $s < t < s+k$. Choose some $i,j$ with $j=i+k$.
We distinguish between $k$ types of the shortest polygonal curves $\Gamma_{ij}$ satisfying the properties in the list in the previous paragraph.

Type (0): None of the points $p_{i+1},p_{i+2},\ldots, p_{j-1}$ is a vertex of $\Gamma_{ij}$.

Type ($u$): The point $p_{i+u}$ is a vertex of $\Gamma_{ij}$ for some $1 \leq u \leq k-1$.

Here, if $\Gamma_{ij}$ has Type (0), then it has no Type ($u$) for any $1 \leq u \leq k-1$. On the other hand, in general $\Gamma_{ij}$ may have Type ($u$) for more than one distinct value of $u$. Our algorithm determines the value of $\Pi_{ij}$ depending on the type of $\Gamma_{ij}$.

If $\Gamma_{ij}$ has Type (0), then the vertices $q_2, q_3, \ldots, q_{m-1}$ satisfy the optic reflection law by Lemma~\ref{lem:reflection}. Then, using the reflections described in the first part of Theorem~\ref{thm:perim_struct}, the existence of a polygonal curve satisfying these conditions, and in case of existence the vertices of this curve and its length can be found in $O(k)$ steps. Assume that $\Gamma_{ij}$ has Type ($u$) for some $1 \leq u \leq k-1$. Then $\Gamma_{ij}$ is the union of some shortest polygonal curves $\Gamma_{i,i+u}$ and $\Gamma_{i+u,j}$, and its length is $\Pi_{i,i+u}+\Pi_{i+u,j}$. To find the shortest polygonal curves having Type ($u$) for all possible values of $u$, and the length of these curves, we need $O(k)$ steps.

Starting with $k=3$, for every fixed value of $k$ we execute the above procedure for all $1\leq i \leq n$. Then we increase the value of $k$ by one and repeat all steps until $k=n$. Thus, we obtain the values of $\Pi_{ij}$ for all $i,j$ with $i<j\leq i+n$ in $O(n^3)$ steps. Indeed, we have seen that for any fixed values $i$ and $j=i+k$, we can find the value of $\Pi_{ij}$ in $O(k)$ times, and hence, the estimate $O(n^3)$ follows by executing this procedure for all values of $i$ and $k$, and using the inequality $k \leq n$.

Let $p_1(C) = \min \{ \Pi_{i,i+n} : i=1,2,\ldots, n \}$. We need to handle the case of the convex polygons $Q \in \F(C)$ with $s(Q) = NN \ldots N$.
Nevertheless, by Remark~\ref{rem:alln}, all such polygons, if they exist, and their perimeter can be found in $O(n)$ steps.
In conclusion, $p(C)$, and also an inscribed convex polygon with minimum perimeter, can be found in $O(n^3)$ steps.

We note that by (ii) of Theorem~\ref{thm:perim_struct}, if $n$ is even and there is some $Q \in \F(C)$ with $s(Q) = NN \ldots N$ then 
there is some $Q \in \F(C)$ with $s(Q) \neq NN \ldots N$. This yields that if $n$ is even then $p(C) = p_1(C)$; thus, if we want to calculate only the value of $p(C)$ we can skip the last step of the algorithm for $n$ even. 

\begin{rem}\label{rem:perim_dectree}
As in case of minimum area polygons (cf. Remark~\ref{rem:area_dectree}), our algorithm can be carried out in such a way that we keep track of all minimum perimeter inscribed polygons. These polygons are not listed by the algorithm, but represented in the form of a decision tree.
\end{rem}

\subsection{Combinatorial properties}\label{subsec:perim_comb}

In Subsection~\ref{subsec:perim_algorithm} we have seen that, apart from the sequence containing only $N$s, for any cyclic sequence $s \in \{ U,N \}^n$ there is at most one minimum perimeter polygon $Q \in \F(C)$ whose associated cyclic sequence is $s$. Our main goal in this subsection is to determine the cyclic sequences $s \in \{ N,U \}^n$ with the property that for a suitable convex $n$-gon $C$ there is some $Q \in \F(C)$ such that $s(Q) = s$.

Let $s \in \{U,N\}^n$, and let us call it realizable if it is associated to some convex polygon $Q \in \F(C)$ with a suitable choice of $C$. Similarly like in Subsection~\ref{subsec:area_comb}, if $s$ is realizable, then it contains no three consecutive $U$s.
Indeed, if three consecutive vertices of $C$ are used, then the perimeter of $Q$ could be further reduced by removing the middle vertex from the vertex set of $Q$.
Our main result is the following.

\begin{thm}\label{thm:perim_comb}
A cyclic sequence $s \in \{ U, N \}^n$ is realizable if and only if $s$ does not contain three consecutive $U$s.
\end{thm}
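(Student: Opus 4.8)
The converse implication (realizable $\Rightarrow$ no three consecutive $U$'s) is already recorded before the statement, so the content is the other direction, and the plan is to mimic the proof of Theorem~\ref{thm:area_combin}: for each admissible $s$ produce a convex $n$-gon $C$ carrying an obvious Fagnano-type candidate $Q$ with $s(Q)=s$, and then beat every competitor. The degenerate sequence $s=\overbrace{NN\cdots N}^{n}$ is disposed of by Remark~\ref{rem:regular}: for $C$ a regular $n$-gon the midpoint polygon lies in $\F(C)$, uses no vertex of $C$, and the symmetry argument of Remark~\ref{rem:regular} is valid for every $n$. So assume $s$ has at least one $U$, with the $N$'s at cyclic positions $b_1<\dots<b_k$ and $c_j\in\{0,1,2\}$ $U$'s between $b_j$ and $b_{j+1}$. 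The candidate $Q$ is the inscribed polygon that uses every $U$-position vertex of $C$ and carries one vertex obeying the optic reflection law of Lemma~\ref{lem:reflection} in the relative interior of each side of $C$ joining two $N$-positions; by the unfolding procedure in the first part of the proof of Theorem~\ref{thm:perim_struct}, such a $Q$ is the unique inscribed polygon with $s(Q)=s$ and is the shortest polygon of its combinatorial type, since the unfolded picture of the arc of $\partial Q$ between two consecutive $U$-vertices is a straight segment. Thus it suffices to arrange (a) that these reflection points land in the relative interiors of the correct sides and $Q$ is convex, and (b) that no inscribed polygon of a different combinatorial type, and none using a non-$U$ vertex of $C$, is shorter than $Q$.

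For (a)--(b) I would take $C$ to be a small perturbation of $Q$: keep the $U$-position vertices of $Q$ as vertices of $C$ but with the interior angle of $C$ there as flat (close to $\pi$) as needed, while keeping the corresponding interior angle of $Q$ genuinely smaller; and thicken $Q$ slightly along each $NN$-stretch, inserting there the $k$ unused (``bump'') vertices of $C$ so that the prescribed reflection points of $Q$ fall in the relative interiors of the new sides -- the analogue of the small ears glued in the proof of Theorem~\ref{thm:area_combin}. One then argues (b) in two steps. If an inscribed $Q'$ uses a bump vertex sitting between two consecutive $U$-vertices of $Q$ (a singleton $N$-block), then $s(Q')$ has three consecutive $U$'s and deleting the middle one strictly shortens $Q'$; if $Q'$ uses any other bump vertex, then, exactly as in the ``$G\subseteq Q'$'' step of the proof of Theorem~\ref{thm:area_combin}, each side of $C$ forces $Q'$ to carry a vertex essentially as far out as a fixed point of a fixed inscribed region $G$, so $Q'\supseteq\conv(G\cup\{\text{that bump vertex}\})$ and $\perim(Q')\ge\perim(G)+\delta$ for a fixed $\delta>0$, while $\perim(Q)$ can be made as close to $\perim(G)$ as we wish. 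If instead $Q'$ uses no bump vertex, then at every $U$-position $Q'$ either uses the vertex of $C$ there or ``cuts the corner'' there; using it at every $U$-position forces $s(Q')=s$ and hence $\perim(Q')\ge\perim(Q)$ by the unfolding minimality above; so the remaining point is that a corner cut can only lengthen $Q'$.

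The main obstacle is exactly this last point, and it is where the perimeter argument genuinely diverges from the area one. In the area setting Theorem~\ref{thm:area}(i) forbids two consecutive interior-of-side vertices in an extremal polygon, which is what kills corner cuts; there is no such prohibition for perimeter (indeed minimum-perimeter polygons must have consecutive interior vertices along $NN$-stretches). Instead one computes the first-order change of perimeter under a corner cut at a $U$-vertex $p_i$ (replacing the use of $p_i$ by two points at arc-length $\tau$ along the two sides of $C$ at $p_i$): it has the form $\tau\cdot\kappa_i+o(\tau)$, where $\kappa_i$ is a quantity built from the interior angle $\theta_i$ of $C$ at $p_i$ and the angles between the two sides of $C$ at $p_i$ and the two edges of $Q$ at $p_i$, and $\kappa_i>0$ precisely when $C$ is sufficiently flat at $p_i$ relative to the (strictly smaller) interior angle of $Q$ there. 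Thus corner cuts hurt as soon as $C$ is flattened enough at every $U$-vertex. The technical heart is the joint calibration of all the perturbation data: making $C$ flat enough at every $U$-vertex while keeping it convex (for a block $UU$ of $s$ the two adjacent $U$-vertices share a side of $C$ and both must satisfy the flatness condition, constraining $Q$'s shape near there), keeping the bump vertices far enough out that $G$ and $\delta>0$ are bounded away from $0$ uniformly in the perturbation size, yet close enough to $Q$ that its reflection points do not escape their sides and $\perim(Q)$ stays near $\perim(G)$; once these are in place, together with the observation above that an inscribed polygon with $s(Q')=\overbrace{NN\cdots N}^{n}$ would have to corner-cut at every $U$-vertex and so is longer, the proof is complete.
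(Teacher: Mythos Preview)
Your overall construction is close in spirit to the paper's, but the verification strategy diverges, and the divergence is where the trouble lies. The paper does \emph{not} attempt a direct case analysis of competitors. Instead it builds $C$ in two stages: first a regular $k$-gon $C_0$ (with $k$ the number of $N$'s), whose midpoint polygon $Q_0$ is already known from Remark~\ref{rem:regular} to lie in $\F(C_0)$; then a \emph{degenerate} $n$-gon $C'$ obtained by declaring, on each side of $C_0$, either the midpoint (for a single intervening $U$) or two symmetric points near the midpoint (for a $UU$ block) to be new vertices of angle $\pi$. The key step is a pure continuity argument: a minimum-perimeter polygon inscribed in $C'$ must be close to an element of $\F(C_0)$, hence avoids every vertex of $C_0$, and an ellipse/Lemma~\ref{lem:reflection} argument forces it to use every newly inserted vertex. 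Finally one pushes the inserted vertices outward by a tiny $\varepsilon$ to obtain a genuine convex $n$-gon $C$, and continuity again transfers the conclusion. The cases $k\le 2$ are done separately from an obtuse isosceles triangle. No corner-cut computation, no competitor taxonomy, is needed.

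Your plan, by contrast, tries to beat every competitor by hand, and two of the steps do not close as written. First, the ``corner cut'' step is only first order: showing $\kappa_i>0$ tells you nothing about a competitor that replaces the use of a $U$-vertex $p_i$ by a vertex deep in the interior of an incident side (and such competitors are exactly the minimum-perimeter polygons with a different associated sequence). You would need a global monotonicity or convexity statement for the perimeter as a function of the cut, not merely positivity of the derivative at $0$; you have not supplied one, and it is not automatic. Second, your treatment of a competitor $Q'$ that uses a bump vertex in a singleton $N$-block is circular: you assert that then $s(Q')$ has three consecutive $U$'s, but that presumes $Q'$ also uses the two neighbouring $U$-vertices of $C$, which is precisely what the unresolved corner-cut issue leaves open. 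The ``$G\subseteq Q'$'' transplant from Theorem~\ref{thm:area_combin} is likewise underspecified here, since you never fix a concrete $G$ independent of the perturbation parameters. The paper's continuity argument bypasses all of these difficulties by never comparing $Q$ to an arbitrary competitor, only to nearby minimum-perimeter polygons in nearby problems.
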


\begin{proof}
Since no realizable sequence contains three consecutive $U$s, we need to prove that if $s \in \{ U,N \}^n$ does not contain three consecutive $U$s, then $s$ is realizable. Since the cyclic sequence $s = NN \ldots, N$ is clearly realized by a regular polygon, we assume that $s \neq  NN \ldots N$.

Let the number of $N$s in such a sequence $s \in \{ U,N \}^n$ be $k$. First, we prove the assertion in the case that $k \geq 3$. Let $C_0$ be a regular $k$-gon of unit edge length, with vertices $p_1, p_2, \ldots, p_k$, where the indices are understood mod $k$, and let $Q_0$ be the convex hull of the midpoints of the sides of $C_0$. Let the vertices of $Q_0$ be $m_1, m_2, \ldots, m_k$ such that $m_i \in p_ip_{i+1}$. By Remark~\ref{rem:regular}, if $k$ is odd, then $Q_0$ is the unique smallest perimeter convex polygon inscribed in $C_0$. Furthermore, it follows from Remark~\ref{rem:evenn} that if $k$ is even, then for any $\tau \in [0,1]$, there is a unique convex polygon $Q(\tau)$, with vertices $q_1(\tau), q_2(\tau), \ldots, q_n(\tau)$ in counterclockwise order, such that $q_1(\tau)= \tau p_1 + (1-\tau) p_2$ and $q_i(\tau)q_{i+1}(\tau)$ is parallel to $m_im_{i+1}$ for all values of $i$. Furthermore, the perimeters of these polygons are equal, $\F(C_0) = \{ Q(\tau) : \tau \in [0,1] \}$, where $Q(1/2)=Q_0$, and in the degenerate cases $\tau=0$ and $\tau =1$, we have $q_1(0)=q_2(0)$, and $q_1(1)=q_n(1)$.

We note that since the perimeter of a polygon is a continuous function of its vertices, any convex polygon inscribed in $C_0$ whose perimeter is `close to' $p(C_0)$
is `close to' an element of $\F(C_0)$. Moreover, in the family of convex $k$-gons for any fixed value of $k$, $p(C)$ is a continuous function of $C$, implying that if $C_1$ and $C_2$ are convex $k$-gons and $C_1$ is `close to' $C_2$, then $p(C_1)$ is `close to' $p(C_2)$.

Let $\zeta > 0$ be some sufficiently small fixed value. We define an auxiliary, degenerate convex $n$-gon $C'$ in the following way. A side $S_i = p_ip_{i+1}$, $i=1,2,\ldots,k$ of $C_0$ has Type (t) for some $t \in \{ 0,1,2 \}$, if the $i^{\mathrm{th}}$ and the $(i+1)^{\mathrm{st}}$ $N$s in $s$ are separated by $t$ $U$s.
If $S_i$ has Type (0), we regard $S_i$ as a side of $C'$. If $S_i$ has Type (1), we regard $m_i$ as a vertex, and the segments $S_i'=p_i m_i$, $S_i''=m_i p_{i+1}$ as sides of $C'$. Assume that $S_i$ has Type (2). Then we choose two points $p_i'$ and $p_i''$ on $S_i$ symmetric to $m_i$ such that $p_i, p_i', p_i'', p_{i+1}$ are in this linear order on $S_i$, and $|p_i'p_i''|=\zeta$. We regard $p_i',p_i''$ as vertices and $p_ip_i', p_i'p_i'', p_i''p_{i+1}$ as sides of $C'$. We call a (possibly degenerate) convex polygon $Q'$ a polygon \emph{inscribed} in $C'$ if every side of $C'$ contains at least one vertex of $Q'$. By continuity it follows that if $\zeta$ is sufficiently small, then no minimum perimeter polygon inscribed in $C'$ contains a vertex of $C_0$. On the other hand, applying the idea of the proof of Lemma~\ref{lem:reflection}, we obtain that any such polygon contains all the $p_i'$s and $p_i''$s on the Type (2) sides as well as the midpoints of the Type (1) sides of $C_0$ as vertices. Thus, there is a unique minimum perimeter polygon inscribed in $C'$ and the vertex set of this polygon consists of all $p_i'$s and $p_i''$s on the Type (2) sides, the midpoints of the Type (1) sides of $C_0$, and one point in the interior of each Type (0) side of $C_0$.

Now let $u_i$ be the outer unit normal vector of $S_i$ for all values of $i$. Consider some $\delta > 0$. If $S_i$ is of Type (1), set $\bar{p}_i'=m_i+ \varepsilon u_i$, and if $S_i$ is of Type (2), set $\bar{p}_i'=p_i'+ \varepsilon u_i$ and $\bar{p}_i''=p_i''+ \varepsilon u_i$. Let $C$ denote the convex hull of the union of $C_0$ and all points $\bar{p}_i'$ and $\bar{p}_i''$. Then, by continuity and Lemma~\ref{lem:reflection}, if $\delta > 0$ is sufficiently small, then $C$ is a convex polygon, and any minimum perimeter convex polygon $Q$ inscribed in $C$ has all $\bar{p}_i'$s and $\bar{p}_i''$s as vertices, and no vertex of $C_0$ is a vertex of $C$.
Clearly, the sequence assigned to any such polygon is $s$, and thus, Theorem~\ref{thm:perim_struct} yields that such a $Q$ is unique.

\begin{figure}[ht]
\begin{center}
\includegraphics[width=0.5\textwidth]{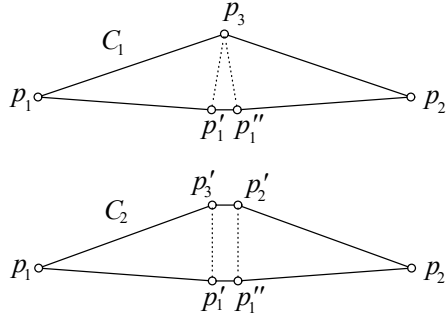}
\caption{The polygons $C_1$ and $C_2$ constructed in the proof of Theorem~\ref{thm:perim_comb}. The parts of the boundaries of the inscribed polygons $Q_1$ and $Q_2$ in the interiors of the polygons are denoted by dotted lines.}
\label{fig:perim_comb}
\end{center}
\end{figure}

We are left with the case that the number $k$ of $N$s in $s$ is at most $2$. By our conditions, from this we have that $s=NUUNU$ or $s=NUUNUU$. In these cases, we use the fact that for an obtuse isosceles triangle $C_0$ with base $p_1p_2$ and apex $p_3$, the shortest closed polygonal curve containing a point from each side of $C_0$ is a degenerate `double segment' containing $p_3$ and the midpoint of $p_1p_2$. By slightly modifying $C_0$ as in the previous consideration, we obtain a convex pentagon $C_1$ and a convex hexagon $C_2$ such that $\F(C_i)$ consists of a unique element $Q_i$ for $i=1,2$, and $s(Q_1)=NUUNU$ and $s(Q_2)=NUUNUU$ (see Figure~\ref{fig:perim_comb}).
\end{proof}

\section{Additional results about circumscribed polygons}\label{sec:others}

Similarly like in \cite{ADLT19}, we may consider the problem of maximizing a geometric quantity among convex polygons circumscribed about a given convex polygon. For this purpose, let us recall Definition 1 from \cite{ADLT19}.

\begin{defn}\label{defn:circumscribed}
Let $C \subset \Re^2$ be a convex $n$-gon. If $Q$ is a convex $m$-gon that
contains $C$ and each vertex of $C$ is on the boundary of $Q$,
then we say that $Q$ is
\emph{circumscribed} about $C$.
\end{defn}

As in \cite{ADLT19}, we intend to exclude the existence of unbounded convex polygonal regions $Q$, containing $C$, with the property that each vertex of $C$ lies on a side of $Q$. Thus, we assume that the sum of any two consecutive angles of $C$ is greater than $\pi$. In particular, from this it follows that $n \geq 5$.
In this section we collect our observations about convex polygons circumscribed about $C$ with maximal perimeter or maximal diameter. We must add that our observations are not sufficient in either problem to provide a complete algorithm to find such a polygon.
We also remark that the problem of finding maximum area circumscribed polygons was the main topic of the paper \cite{ADLT19} motivating our research. Furthermore, the problem of finding a minimum diameter polygon inscribed in a given convex polygon seems different from the `dual' problem discussed in Subsection~\ref{subsec:maxdiam}.

Finally, we note that while a convex polygon $Q$ circumscribed about a convex $n$-gon may have arbitrarily many sides, since by removing the sides of $Q$ not containing a vertex of $P$ we increase the perimeter of $Q$ and do not decrease its diameter, it follows that if $Q$ has maximal perimeter among circumscribed polygons, it has at most $n$ vertices, and the same property holds for at least one circumscribed polygon with maximal diameter.

\subsection{Maximum perimeter convex polygons circumscribed about $C$}

Set $P(C) = \sup \{ \perim(Q) : Q \hbox{ is circumscribed about } C \}$, and note that by our conditions, $P(C)$ exists, and it is attained by a convex polygon $Q$.
Our main result is as follows.

\begin{thm}\label{thm:circum_perim}
Let $Q$ be a maximum perimeter convex polygon circumscribed about $C$. Let the vertices of $Q$ in counterclockwise order be $q_1, q_2, \ldots, q_m$, with the vertices understood mod $m$. For all values of $i$, let the measure of the angle of $Q$ at $q_i$ be denoted by $\beta_i$.
Then every side $q_iq_{i+1}$ of $Q$ contains at least one and at most two vertices of $C$, and if $p_j$ is the unique vertex of $C$ on $q_iq_{i+1}$, then
\begin{equation}\label{eq:circum_perim}
|q_i p_j|  \cot\left( \beta_i \right) = |q_{i+1}p_j| \cot\left( \beta_{i+1}\right).
\end{equation}
\end{thm}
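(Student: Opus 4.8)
The plan is to argue by a local variational perturbation, exactly in the spirit of the proofs of Theorem~\ref{thm:area} and Lemma~\ref{lem:reflection}: fix a maximum perimeter circumscribed polygon $Q$, identify a one-parameter family of admissible perturbations of $Q$ preserving the circumscribed property, and write down that the first-order change of the perimeter vanishes. First I would dispose of the combinatorial claim that each side $q_iq_{i+1}$ contains at least one and at most two vertices of $C$. The lower bound is immediate: if some side $q_iq_{i+1}$ contained no vertex of $C$, then by the remark preceding the theorem we could delete that side (replacing the two adjacent sides by their extensions to their intersection point), strictly increasing the perimeter while keeping $C$ inside and all vertices of $C$ on the boundary, a contradiction. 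For the upper bound, suppose three vertices $p_a,p_b,p_c$ of $C$ (in this cyclic order) lie on one side $q_iq_{i+1}$; then they are collinear, which is impossible for a convex $n$-gon with $n\ge 5$ (three collinear vertices would force two consecutive sides to be parallel or the middle one to be a "degenerate" vertex) — more carefully, three vertices of $C$ on a common line would make $C$ have an angle equal to $\pi$, contradicting strict convexity. Hence each side carries one or two vertices of $C$.

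Next, for the equation \eqref{eq:circum_perim}, I would isolate a side $q_iq_{i+1}$ carrying a single vertex $p_j$ of $C$ in its relative interior (if $p_j$ were an endpoint, it would be a vertex of $Q$ lying on two sides, which we handle separately or note is non-generic), and consider moving the line $\ell_i$ supporting $q_iq_{i+1}$ by an infinitesimal rotation about the fixed point $p_j$. This is the unique admissible first-order motion of that side: it must still pass through $p_j$, and for small rotations it keeps $C$ on the correct side (since $p_j$ is the only vertex of $C$ on $\ell_i$, all other vertices are strictly interior to the corresponding half-plane) and keeps the other vertices of $C$ on their own, unmoved sides. The vertices $q_i$ and $q_{i+1}$ slide along the two neighboring (fixed) lines $\ell_{i-1}$ and $\ell_{i+1}$. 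I would parametrize the rotation by a small angle $t$, compute $\frac{d}{dt}\big(|q_{i-1}q_i| + |q_iq_{i+1}| + |q_{i+1}q_{i+2}|\big)$ at $t=0$ — only these three edge-lengths change — and set it to zero. The standard computation (the derivative of a distance $|xy|$ when $y$ moves with velocity $v$ is $\langle v, (y-x)/|y-x|\rangle$, together with the fact that the velocity of $q_i$ is $|q_ip_j|$ times a unit vector along $\ell_{i-1}$, etc.) collapses, after using that the angle between $\ell_{i-1}$ and $\ell_i$ is $\pi-\beta_i$ and between $\ell_i$ and $\ell_{i+1}$ is $\pi-\beta_{i+1}$, to precisely $|q_ip_j|\cot\beta_i = |q_{i+1}p_j|\cot\beta_{i+1}$.

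The cleanest way to organize the final computation, and the way I would actually present it, is via a reflection/optic argument analogous to Lemma~\ref{lem:reflection}: rotating $\ell_i$ about $p_j$ changes the lengths $|q_{i-1}q_i|+|q_iq_{i+1}|$ and $|q_iq_{i+1}|+|q_{i+1}q_{i+2}|$ in a way governed by the angles the fixed lines $\ell_{i-1},\ell_{i+1}$ make with $\ell_i$; writing $\alpha = \angle q_{i-1}q_iq_{i+1}=\beta_i$ and $\gamma=\angle q_iq_{i+1}q_{i+2}=\beta_{i+1}$, the displacement of $q_i$ has magnitude $|q_ip_j|/\sin\beta_i$ times $|t|$ along $\ell_{i-1}$ and its component along the direction in which $|q_{i-1}q_i|+|q_iq_{i+1}|$ varies produces the factor $\cos\beta_i$, whence the $\cot$. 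I would then note that \eqref{eq:circum_perim} is the first-order optimality condition, and that it must hold because otherwise a suitable sign of $t$ strictly increases $\perim(Q)$. The main obstacle I anticipate is purely bookkeeping: getting the signs and the trigonometric identification right, in particular correctly relating the rates at which $q_i$ and $q_{i+1}$ slide (these involve the distances $|q_ip_j|$ and $|q_{i+1}p_j|$ because the rotation is centered at $p_j$) and correctly extracting the relevant projection onto the edge directions; choosing the reflection formulation of Lemma~\ref{lem:reflection} rather than a raw coordinate computation is what keeps this manageable. One should also remark that sides carrying two vertices of $C$ are rigid to first order (they have no admissible motion), so \eqref{eq:circum_perim} is vacuous there, consistent with the statement which only asserts it when $p_j$ is the \emph{unique} vertex on $q_iq_{i+1}$.
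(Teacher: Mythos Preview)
Your overall strategy---rotate the supporting line $\ell_i$ about the lone vertex $p_j$ and set the first variation of the perimeter to zero---is the right one, and it is also the variation the paper employs. There is, however, a genuine gap: the computation you describe does \emph{not} produce the cotangent relation \eqref{eq:circum_perim}. You correctly note that three edge lengths change under the rotation, namely $|q_{i-1}q_i|$, $|q_iq_{i+1}|$ and $|q_{i+1}q_{i+2}|$, and that one must differentiate their sum. Carrying this out (place $p_j$ at the origin, $L_i$ along the $x$-axis, $q_i=(-a,0)$, $q_{i+1}=(b,0)$ with $a=|q_ip_j|$, $b=|q_{i+1}p_j|$, and rotate $L_i$ by an angle $\theta$) gives
\[
\frac{d}{d\theta}\perim(Q)\Big|_{\theta=0}
= \underbrace{\frac{a}{\sin\beta_i}}_{|q_{i-1}q_i|}
\;\underbrace{-\,a\cot\beta_i + b\cot\beta_{i+1}}_{|q_iq_{i+1}|}
\;\underbrace{-\,\frac{b}{\sin\beta_{i+1}}}_{|q_{i+1}q_{i+2}|}
= a\tan\tfrac{\beta_i}{2} - b\tan\tfrac{\beta_{i+1}}{2},
\]
so the first-order condition reads $|q_ip_j|\tan(\beta_i/2)=|q_{i+1}p_j|\tan(\beta_{i+1}/2)$, not \eqref{eq:circum_perim}. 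A sanity check: when $\beta_i=\beta_{i+1}=\pi/2$ the neighboring sides are perpendicular to $q_iq_{i+1}$; a small rotation leaves $|q_iq_{i+1}|=(a+b)\sec\theta$ stationary at $\theta=0$ but changes $|q_{i-1}q_i|+|q_{i+1}q_{i+2}|$ by $a-b$ to first order, so criticality forces $a=b$, which \eqref{eq:circum_perim} (reducing to $0=0$) does not.

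The reason the paper obtains \eqref{eq:circum_perim} is that its argument differentiates only the single side length $f(\beta_i)=|q_iq_{i+1}|=d_{i-1}/\sin\beta_i+d_{i+1}/\sin\beta_{i+1}$ and sets \emph{that} derivative to zero; the contributions from the adjacent edges $q_{i-1}q_i$ and $q_{i+1}q_{i+2}$, which also vary under the rotation, are omitted. Your instinct to track all three varying edges is the correct one for a perimeter functional; the discrepancy you would discover on actually executing the computation points to a problem with the stated formula rather than with your method.
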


\begin{proof}
Note that if $q_iq_{i+1}$ contains no vertex of $C$, then the remaining sidelines of $Q$ are the sidelines of a convex polygon $Q'$ circumscribed about $C$ and satisfying the inequality $\perim(Q) < \perim(Q')$. This shows that $q_iq_{i+1}$ contains at least one vertex of $C$, and a similar argument excludes the case that $q_iq_{i+1}$ contains a unique vertex of $C$ coinciding with $q_i$ or $q_{i+1}$. The fact that any such side contains at most two vertices of $C$ follows from the convexity of $C$. Thus, we may assume that $q_iq_{i+1}$ contains exactly one vertex $p_j$ of $C$, which is in the interior of $q_iq_{i+1}$.

We show that in this case (\ref{eq:circum_perim}) holds.
Let $L_{i-1}$, $L_i$ and $L_{i+1}$ denote the lines through $q_{i-1}q_i$, $q_iq_{i+1}$ and $q_{i+1}q_{i+2}$, respectively.
Let $d_{i-1}$ and $d_{i+1}$ denote the distances of $p_j$ from $L_{i-1}$ and $L_{j+1}$, respectively.

\begin{figure}[ht]
\begin{center}
\includegraphics[width=0.85\textwidth]{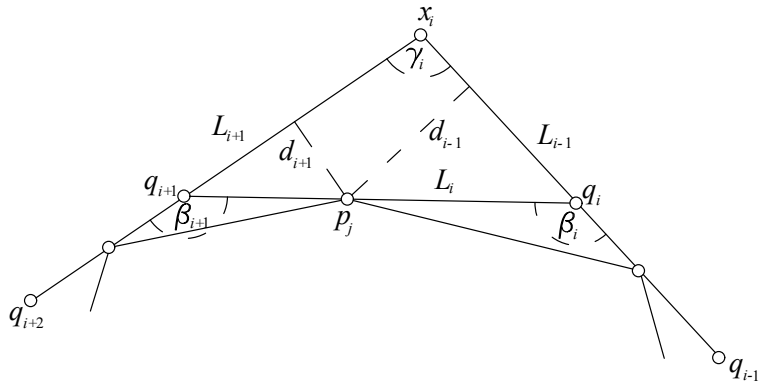}
\caption{Notations for the proof of Theorem~\ref{thm:circum_perim}.}
\label{fig:circumscribed}
\end{center}
\end{figure}

Consider the case that $L_{i-1}$ and $L_{i+1}$ intersect, and their intersection point $x_i$ is separated from $C$ by $L_i$. Let the measure of the angle $\angle q_i x_i q_{i+1}$ be denoted by $\gamma_i$ (cf. Figure~\ref{fig:circumscribed}). 
An elementary computation shows that $|q_ip_j| + |p_jq_{i+1}| = \frac{d_{i-1}}{\sin \beta_i} + \frac{d_{i+1}}{\sin \beta_{i+1}}$. Now, let us change $Q$ by rotating $L_i$ around $p_j$. Then the above expression can be regarded as a function $f(\beta_i)$, with $\beta_{i+1}= \pi + \gamma_i - \beta_i$, and with the values of $d_{i-1}$, $d_{i+1}$ and $\gamma_i$ fixed. By differentiating $f$, we obtain that it is a strictly concave function of $\beta_i$, and its unique maximum is attained if
$0=\frac{d_{i-1} \cos \beta_i}{\sin^2 \beta_i} - \frac{d_{i+1} \cos \beta_{i+1}}{\sin^2 \beta_{i+1}} = |q_i p_j|  \cot\left( \beta_i \right) - |q_{i+1}p_j| \cot\left( \beta_{i+1}\right)$.

If $L_{i-1}$ and $L_{i+1}$ do not intersect, or their intersection point is not separated from $C$ by $L_i$, a similar argument can be applied.
\end{proof}

In Remark~\ref{rem:circum_perim} we use the notation in Theorem~\ref{thm:circum_perim}.

\begin{rem}\label{rem:circum_perim}
Consider some point $q_i$ such that the line $L$ through $q_ip_j$ supports $C$. Then it can be shown that $L$ contains at most one point $q_{i+1}$ satisfying the condition in (\ref{eq:circum_perim}), and the coordinates of this point can be computed from the coordinates of $q_i$ and the vertices of $C$ in $O(1)$ steps. Nevertheless, since the condition determining this point seems more complicated than in Sections~\ref{sec:area} and \ref{sec:perim}, to characterize the convex polygons circumscribed about $C$ and satisfying the conditions in Theorem~\ref{thm:circum_perim} seems to be a more difficult problem than in the cases investigated in this paper.
\end{rem}

\subsection{Maximum diameter convex polygons circumscribed about $C$}\label{subsec:maxdiam}

Compared to the case of perimeter, it seems much easier to find the maximum diameter of the convex polygons circumscribed about $C$. Indeed, let $x_i$ denote the intersection point of the lines through $p_{i-1}p_i$ and $p_{i+1}p_{i+2}$, and note that $q_i$ exists by our conditions for $C$. Then at least one circumscribed polygon of maximal diameter can be obtained
as the convex hull of some points $q_i \in \conv \{ p_i, p_{i+1}, x_i \}$, $i=1,2,\ldots,n$. Thus, the diameter of any convex polygon circumscribed about $C$ is less than or equal to the diameter of $X=\conv \{ x_i : i=1,2,\ldots, n \}$, which we denote by $\diam (X)$. Here the endpoints of any diameter of $X$ are vertices of $X$. On the other hand, if $X$ has a diameter $xy$ whose endpoints $x,y$ are not consecutive vertices of $X$, then there is a convex polygon $Q$ circumscribed about $C$ whose diameter is $\diam (X)$.
Hence, to find the maximum diameter of the convex polygons circumscribed about $C$, in `many' cases it is sufficient to find $\diam (X)$.

We note that $\diam (X)$ can be computed in $O(n)$ steps. To do it, first we compute the points $x_i$ and then the vertices of $X$ by Graham's convex hull algorithm \cite{Graham}. If the points $x_i$ are already ordered according to angles in polar coordinates, as in our case, the running time of this algorithm is $O(n)$.
As the last step, the diameters of $X$ can be computed from the vertices of $X$ by the rotating calipers algorithm of Shamos \cite{Shamos}.
Nevertheless, to give a complete algorithm to find the maximum diameter of the convex polygons circumscribed about $C$, the remaining case, when the diameters of $X$ connect consecutive $x_i$s, must also be handled.

\noindent
\textbf{Acknowledgements.}\\
The authors express their gratitude to Bal\'azs Keszegh for many useful comments, and the anonymous referees for many helpful suggestions.

\noindent
\textbf{Funding}: The second author was supported by the National Research, Development and Innovation Office, NKFI, K-119670, the J\'anos Bolyai Research Scholarship of the Hungarian Academy of Sciences, and the BME IE-VIZ TKP2020 and \'UNKP-20-5 New National Excellence Programs by the Ministry of Innovation and Technology.



\begin{thebibliography}{99}

\bibitem{Aggarwal} A. Aggarwal, M.M. Klawe, S. Moran et al, Geometric applications of a matrix-searching algorithm. Algorithmica 2 (1987) 195–208.

\bibitem{ABKS16} A. Akopyan, A.M. Balitskiy, R.N. Karasev, A. Sharipova, Elementary approach to closed billiard trajectories in asymmetric normed spaces,  
Proc. Amer. Math. Soc. 144(10) (2016) 4501-4513.

\bibitem{AAspeech} S. Artstein-Avidan, Billiards and caustics, Workshop on `Geometric functional analysis and applications', MSRI, Berkely CA, USA, November 13-17, 2017, https://www.msri.org/workshops/811/schedules/23249

\bibitem{AFOR18} S. Artstein-Avidan, D.I. Florentin, Y. Ostrover, D. Rosen, Duality of caustics in Minkowski billiards, Nonlinearity 31(4) (2018) 1997.

\bibitem{ADLT19} M. Ausserhofer, S. Dann, Z. L\'angi, G. T\'oth, An algorithm to find maximum area polygons circumscribed about a convex polygon, Discrete Appl. Math. 255 (2019) 98-108.

\bibitem{Besant} W.H. Besant, Conic Sections, London: George Bell and Sons, 1907. 

\bibitem{Bezdek} D. Bezdek, K. Bezdek, Shortest billiard trajectories, Geom. Dedicata 141(1) (2009) 197-206.

\bibitem{BDDG85} J.E. Boyce, D.P. Dobkin, R.L. Drysdale III, L.J. Guibas, Finding extremal polygons, SIAM J. Comput. 14 (1985) 134-147.
	
\bibitem{Chandran} S. Chandran and D.M. Mount, A parallel algorithm for enclosed and enclosing triangles, Internat. J. Comput. Geom. Appl. 2 (1992) 191-214.

\bibitem{DR09} A. Deniz, A. Ratiu, On the existence of Fagnano trajectories in convex polygonal billiards, Regul. Chaot. Dyn. 14 (2009) 312-322.

\bibitem{EOR92} D. Eppstein, M. Overmars, G. Rote, G. Woeginger, Finding minimum area $k$-gons, Discrete Comput. Geom. 7(1) (1992), 45-58.
	
\bibitem{Ghomi} M. Ghomi, Shortest periodic billiard trajectories in convex bodies, Geom. Funct. Anal. 14 (2004) 295-302.

\bibitem{Graham} R. Graham, An efficient algorithm for determining the convex hull of a finite planar set, Inform. Process. Lett. 1(4) (1972) 132–133.

\bibitem{KL85} V. Klee, M. Laskowski, Finding the smallest triangles containing a given convex polygon, J. Algorithms 6(3) (1985) 359-375.

\bibitem{RAMB86} J. O'Rourke, A. Aggarwal, S. Maddila, M. Baldwin, An optimal algorithm for finding minimal enclosing triangles, J. Algorithms 7(2) (1986) 258-269.

\bibitem{Shamos} M.I. Shamos, Computational Geometry, PhD. thesis, Yale University, 1978.

\bibitem{Tabachnikov} S. Tabachnikov, Geometry and Billiards, Amer. Math. Soc., 2005.


\end{thebibliography}
\end{document}